\theoremstyle{theorem}
\newtheorem*{theorem}{Theorem}
\newtheorem*{corollary}{Corollary}
\theoremstyle{definition}
\def\N{\mathbb N}
\def\Z{\mathbb Z}
\begin{document}

\title{Following in Yiu's Footsteps but on the Eisenstein Lattice}
\markright{Following in Yiu's Footsteps}

\author{Christian Aebi and Grant Cairns}

\address{Coll\`ege Calvin, Geneva, Switzerland 1211}
\email{christian.aebi@edu.ge.ch}
\address{Department of Mathematical and Physical Sciences, La Trobe University, Melbourne, Australia 3086}
\email{G.Cairns@latrobe.edu.au}

\maketitle

\begin{abstract}
Paul Yiu  proved that all Heron triangles are realizable on the integer lattice. 
We give an analogous result for triangles with vertices on the Eisenstein lattice.
\end{abstract}

A planar triangle is called  a \emph{Heron triangle}, or a Heronian triangle, if it has integer side lengths and integer area. They are named after Hero of Alexandria 
($\sim$10--75AD), and it would be apt, and much nicer, to call them Hero triangles, but the terms Heron and Heronian are perhaps  too well established to change (?). Hero himself is sometimes called Heron, probably in the same way that Plato is called Platon in French  and in many other languages. 

Paul Yiu   \cite{Yiu} proved in this \emph{Monthly}  that all Heron triangles are realizable as triangles with vertices on the integer lattice. Considered in the complex plane, the integer lattice is the ring $\Z[i]$ generated by the elements $1$ and $i$, and the members of  $\Z[i]$ are called \emph{Gaussian integers}. 
In this note we give an analogous result for the \emph{Eisenstein lattice}, which is the lattice in the complex plane generated by the elements $1$ and $\omega=e^{2\pi i/ 3}=\frac12(-1+i\sqrt3)$. The elements of $\Z[\omega]$ are called \emph{Eisenstein integers}, named after the German mathematician Gotthold  Eisenstein (1823--1852).
As usual, we denote the complex conjugate of $z\in  \Z[\omega]$ by $z^*$. Note that $\omega^*=\omega^2=-1-\omega$.
For $x,y\in\N$, the \emph{norm} of the element $z=x+y\omega\in \Z[\omega]$ is given by $N( z):=zz^*=x^2-xy+y^2$. 
The key property of the Eisenstein integers we will use below is that, like the Gaussian integers, they enjoy unique factorization (see \cite[Chapter~7]{St} and 
\cite[Chapter~9.1]{IR}); that is, every element can be written as a product of primes, and the factors are unique up to multiplication by a unit. The units in $\Z[\omega]$ are the six elements of norm one: $\{\pm1,\pm\omega,\pm(1+\omega)\}$; see Figure~\ref{F}. Since we will be discussing primes in $\Z$ and in $\Z[\omega]$, we will refer to the latter as  \emph{$\Z[\omega]$-primes}, and also use \emph{$\Z[\omega]$-prime} as an adjective. A key fact is that a prime $p\in\Z$ is $\Z[\omega]$-prime if and only if $p\equiv 2\pmod 3$ \cite[Chapter~9.1]{IR}.

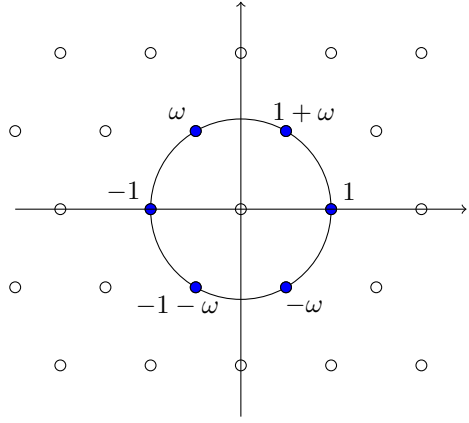
\begin{figure}[h]
\begin{center}
\begin{tikzpicture}[scale=1.2]
\def\r{1.732};

\foreach \i in {-2,...,2}
\foreach \j in {-1,...,1}
\draw (\i,\j*\r) circle (.06);
\foreach \i in {-2,...,2}
\foreach \j in {-1,...,0}
\draw (\i-1/2,\r*\j+\r/2) circle (.06);

\draw[color=black] (-.7,\r/2+.2) node {$\omega$};
\draw[color=black] (1.2,.2) node {$1$};
\draw[color=black] (-1.3,.2) node {$-1$};
\draw[color=black] (-.7,-\r/2-.2) node {$-1-\omega$};
\draw[color=black] (.7,\r/2+.2) node {$1+\omega$};
\draw[color=black] (.7,-\r/2-.2) node {$-\omega$};

\draw[color=black]  (0,0) circle (1);

\draw[fill=blue]  (1,0) circle (.06);
\draw[fill=blue]  (-1,0) circle (.06);
\draw[fill=blue]  (.5,\r/2) circle (.06);
\draw[fill=blue]  (-.5,\r/2) circle (.06);
\draw[fill=blue]  (.5,-\r/2) circle (.06);
\draw[fill=blue]  (-.5,-\r/2) circle (.06);

 \draw [->] (0,-2.3) -- (0,2.3);
 \draw [->] (-2.5,0) -- (2.5,0);

  \end{tikzpicture}
\end{center}
\caption{The units in the ring of Eisenstein integers}\label{F}
\end{figure}

If a triangle has its vertices on the Eisenstein lattice, then its area is of the form $\frac{\sqrt3}4 n$, where $n\in\N$. Indeed,
for the triangle with vertices $0,x+y\omega,z+w\omega$, with $x,y,z,w\in\N$, the area is $\frac{\sqrt3}4 (xw-yz)$. The distance from the point $x+y\omega\in \Z[w]$ to the origin is $\sqrt{x^2-xy+y^2}$, and it is well known that for integers of the form $x^2-xy+y^2$, all prime divisors congruent to $2 \pmod 3$  have even exponent. Indeed, this follows readily from the description  given above of primes that are  $\Z[\omega]$-prime. Alternatively, one can use the fact that an integer can be written in the form $x^2-xy+y^2$ if and only if it can be written in the form $3m^2+n^2$; just consider 
\[
3\left(\frac{x}2\right)^2+\left(\frac{x}2-y\right)^2,\quad3\left(\frac{y}2\right)^2+\left(\frac{y}2-x\right)^2,\quad
3\left(\frac{x-y}2\right)^2+\left(\frac{x+y}2\right)^2,
\]
according to whether $x,y$ or $x+y$ is even \cite[p.~223]{CG}. The fact that for integers of the form $3m^2+n^2$, all prime divisors congruent to $2 \pmod 3$  have even exponent is a common number theory exercise; see \cite[p.~333]{AG}.  
For our purposes, the important consequence is that the length of the sides of a triangle with vertices on the Eisenstein lattice are of the form $r\sqrt{t} $, where $r,t\in\N$ and $t$ has no prime divisors congruent to $2 \pmod 3$.

\begin{theorem}\label{T:main}
A planar triangle $T$ with side lengths $a,b,c$ is realizable on the Eisenstein lattice if and only if the following three conditions hold:
\begin{enumerate}
\item[(i)]
the area of $T$ is of the form $\frac{\sqrt3}4 n$, where $n\in\N$, 
\item[(ii)] $a^2,b^2,c^2\in\N$,
\item[(iii)] one of the side lengths of $T$  is of the form $r\sqrt{t}$, where $r, t\in\N$  and  $t$ has no prime divisors congruent to $2 \pmod 3$.
\end{enumerate}
\end{theorem}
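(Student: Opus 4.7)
One direction of the theorem is essentially contained in the discussion preceding the statement, so the plan focuses on sufficiency. The first step is to upgrade hypothesis (iii) to the assertion that each of $a^2,b^2,c^2$ is the norm of an Eisenstein integer. Heron's formula combined with (i) yields the identity
\[
4b^2c^2=(b^2+c^2-a^2)^2+3n^2,
\]
so $b^2c^2$ is represented by $x^2+3y^2$ and hence has only even exponents at primes $\equiv 2\pmod 3$ (using the characterization cited in the introduction); by symmetry the same holds for $a^2b^2$ and $a^2c^2$, and combining with the property of $a^2$ supplied by (iii) forces $b^2$ and $c^2$ to have it too.

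Next I would place one vertex at $0$ and seek $\alpha,\beta\in\Z[\omega]$ with $N(\alpha)=a^2$, $N(\beta)=b^2$, $N(\alpha-\beta)=c^2$. The idea is to first prescribe the product $\gamma:=\alpha\beta^*$ and then factor it. Its real part must be $(a^2+b^2-c^2)/2$ and its imaginary part $\pm\tfrac{\sqrt3}{2}n$; using $i\sqrt3/2=\omega+\tfrac12$, the natural candidate (after fixing orientation) is
\[
\gamma=\frac{a^2+b^2-c^2+n}{2}+n\omega.
\]
I would then verify that $\gamma\in\Z[\omega]$ (the parity $a^2+b^2-c^2+n\equiv 0\pmod 2$ follows from reducing the Heron relation modulo $2$, which gives $n\equiv a^2+b^2+c^2\pmod 2$), and that $N(\gamma)=a^2b^2$ (which is the identity $4a^2b^2=(a^2+b^2-c^2)^2+3n^2$).

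The crux is to factor $\gamma=\alpha\mu$ in $\Z[\omega]$ with $N(\alpha)=a^2$ and $N(\mu)=b^2$; setting $\beta:=\mu^*$ then automatically gives $N(\beta)=b^2$ and
\[
N(\alpha-\beta)=a^2+b^2-2\operatorname{Re}(\gamma)=a^2+b^2-(a^2+b^2-c^2)=c^2,
\]
completing the construction. The factorization is carried out prime by prime using unique factorization in $\Z[\omega]$: for inert primes ($p\equiv 2\pmod 3$) the even exponents in $a^2$ and $b^2$ make the split routine; for split primes ($p=\pi\pi^*$ with $p\equiv 1\pmod 3$) one distributes the $\pi$- and $\pi^*$-factors of $\gamma$ between $\alpha$ and $\mu$ so that $v_\pi(\alpha)+v_{\pi^*}(\alpha)=v_p(a^2)$, the required bounds $v_\pi(\alpha)\le v_\pi(\gamma)$ and $v_{\pi^*}(\alpha)\le v_{\pi^*}(\gamma)$ being simultaneously satisfiable since $v_\pi(\gamma)+v_{\pi^*}(\gamma)=v_p(a^2b^2)$; the ramified prime $3$ is handled analogously via $v_{1-\omega}(\gamma)=v_3(a^2b^2)$, with a unit absorbed into $\alpha$ to make $\alpha\mu=\gamma$ hold on the nose. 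The main obstacle is this factorization step, but the prime-by-prime counting inequalities play out uniformly once the first paragraph has ensured that $a^2$, $b^2$, and $c^2$ are all Eisenstein norms.
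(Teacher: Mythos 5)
Your proposal is correct and follows essentially the same route as the paper: both reduce the problem, via Heron's formula, to factoring an Eisenstein integer of norm $a^2b^2$ (your $\gamma$ is the paper's $z=(1+\omega)(u+v\omega)$ up to a unit and conjugation, the paper reaching it through the substitution $\Delta=u+v$, $n=u-v$) into factors of norms $a^2$ and $b^2$ using unique factorization in $\Z[\omega]$. The only organizational difference is that you establish up front that $b^2$ and $c^2$ also have only even exponents at primes $\equiv 2\pmod 3$ (which the paper instead records afterwards as its Corollary) and you spell out the valuation bookkeeping that the paper compresses into ``from each conjugate pair choose a factor dividing $z$.''
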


\begin{proof} 
Consider a planar triangle $T$ with side lengths $a,b,c$ with $a^2,b^2,c^2\in\N$ and area $\frac{\sqrt3}4 n$, where $n\in\N$. Suppose furthermore that $a=r\sqrt{t}$, where $r,t\in\N$  and $t$ has no prime divisors congruent to $2 \pmod 3$.
We will construct a triangle $ABC$, with vertices on the Eisenstein lattice, whose edge lengths agree with those of $T$.

Heron's formula 
 \cite[Chapter~6.7]{OW} for the area of $T$ gives
\[
\frac{\sqrt3}4 n=\sqrt{s(s-a )(s-b )(s-c )},
\]
where $s=\frac{(a +b +c )}2$ is the semi-perimeter.
Hence, 
\begin{equation}\label{E:heron0}
3n^2=(a +b +c )(a +b -c )(a -b +c )(-a +b +c ).
\end{equation}
Expanding and rearranging as a quadratic in $c^2$ gives
\begin{equation}\label{E:heron}
c^4-2(a^2+b^2)c^2 +(a^2-b^2)^2 +3n^2=0.
\end{equation}
As $c^2$ is an integer, the discriminant of \eqref{E:heron} is necessarily a square. Hence
\begin{equation}\label{E:c}
c^2=(a^2+b^2)+\Delta,
\end{equation}
where $\Delta\in\Z$ and 
  $\Delta^2= (a^2+b^2)^2-((a^2-b^2)^2 +3n^2)$. Thus
\begin{equation}\label{E:dis}
\Delta^2 +3n^2=4a^2b^2.
\end{equation}
From \eqref{E:dis}, the integers $\Delta$ and $n$ have the same parity, so we may set
\begin{equation}\label{E:uv}
\Delta=u+v,\qquad n=u-v,
\end{equation}
for $u,v\in \Z$.  Then $\Delta^2 +3n^2=4(u^2-uv+v^2)$ and so \eqref{E:dis} gives
\begin{equation}\label{E:dis2}
u^2-uv+v^2=a^2b^2.
\end{equation}
Let $z:=(1+\omega)(u+v\omega)$. Then $z$ has norm $zz^*=u^2-uv+v^2=a^2b^2$, by \eqref{E:dis2}. 
Factoring  in $\Z[\omega]$, it is convenient to write $z$ in the form $z=-f\cdot g$, where $N(f)=a^2$ and $N(g)=b^2$. The factors $f,g$ can be found as follows. Notice that because $t$ has no prime divisors congruent to $2 \pmod 3$, its prime divisors are not $\Z[\omega]$-prime; see \cite[Chapter~9.1]{IR}. Hence,
in the prime decomposition
of $a^2=r^2t$ in $\Z[\omega]$,  the $\Z[\omega]$-prime factors come in complex conjugate pairs. From each pair of factors, choose a factor which is a divisor of $z$, and let $f$ denote the product of these terms. So  $N(f)=a^2$. Then set $g=-z/f$, so  $N(g)=b^2$.

Now let $f=m+n\omega$ and $g=q+p\omega$, for $m,n,p,q\in \Z$. Notice that by construction, $m+n\omega$ has norm
\begin{equation}\label{E:mn}
m^2-mn+n^2=a^2,
\end{equation}
$q+p\omega$ has norm
\begin{equation}\label{E:pq}
p^2-pq+q^2=b^2,
\end{equation}
and $z=(1+\omega)(u+v\omega) =-(m+n\omega)(q+p\omega)$. 
Expanding both sides of the last equation gives
$u-v+u\omega=(-m q+ n p)+ (-m p- n q+ n p)\omega$,
and so the components $u,v$ are
\begin{equation}\label{E:3uv}
u= -m p+ n (p-q),\qquad
v=m (q-p)-nq.
\end{equation}
Thus, from \eqref{E:c}, \eqref{E:uv}, \eqref{E:mn}, \eqref{E:pq} and \eqref{E:3uv},
\begin{align}
c^2&=a^2+b^2+\Delta\notag\\
&=a^2+b^2+(u+v)\notag\\
&=(m^2-mn+n^2)+(p^2-pq+q^2)+m (q-2p)+n(p-2q)\notag\\
&=(m-p)^2-(m-p)(n-q)+(n-q)^2.\label{E:fin}
\end{align}
Consider the triangle $ABC$, where  $C$ is the origin, 
$B=m+n\omega$ and $A=p+q\omega$.
Then, as required, 
$BC$ has length $\sqrt{m^2-mn+n^2}=a$, by \eqref{E:mn},
$AC$ has length $\sqrt{p^2-pq+q^2}=b$, by \eqref{E:pq},
and
$AB$ has length
\[
\sqrt{(m-p)^2-(m-p)(n-q)+(n-q)^2}=c,
\]
 by \eqref{E:fin}.
\end{proof} 

Notice that the above theorem has the rather surprising corollary.

\begin{corollary} Suppose a planar triangle $T$ has area of the form $\frac{\sqrt3}4 n$, where $n\in\N$, and that the squares of the side lengths of $T$ are integers. If one of its side lengths is of the form $r\sqrt{t}$, where $r,t\in\N$  and $t$ has no prime divisors congruent to $2 \pmod 3$, then the other two sides have the same form.
\end{corollary}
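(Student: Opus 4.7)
The plan is to deduce the corollary almost immediately from Theorem~\ref{T:main} combined with the remarks preceding it about the possible side lengths of lattice triangles.

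First, I would verify that the hypotheses of the corollary are exactly conditions (i), (ii), (iii) of Theorem~\ref{T:main}. Conditions (i) and (ii) are directly assumed, and the assumption that some side length has the form $r\sqrt{t}$ with $t$ having no prime divisors congruent to $2\pmod 3$ is precisely condition (iii). Hence Theorem~\ref{T:main} applies and produces a triangle $ABC$ with vertices on the Eisenstein lattice whose side lengths are $a$, $b$, $c$.

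Next, I would invoke the paragraph preceding the theorem, where it is shown that \emph{every} side of a triangle with vertices on the Eisenstein lattice has length of the form $r\sqrt{t}$ with $r,t\in\N$ and $t$ free of prime divisors congruent to $2\pmod 3$. Applying this observation to the three sides $BC$, $CA$, $AB$ of the triangle produced by the theorem yields the conclusion: the two remaining side lengths also have this form.

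The only step that requires any care is the verification that the hypothesis ``one side has the form $r\sqrt{t}$'' is really all that is needed to invoke the theorem; this is immediate since the theorem is stated with precisely that asymmetric hypothesis on a single side. There is no genuine obstacle here, which is what makes the corollary ``surprising'': the symmetry of the conclusion is hidden inside the realizability criterion of Theorem~\ref{T:main}, whose proof carried the burden of asymmetry through the choice of the distinguished side $a=r\sqrt{t}$.
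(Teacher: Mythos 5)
Your proposal is correct and is precisely the argument the paper intends: the corollary's hypotheses match conditions (i)--(iii) of the theorem, so the triangle is realizable on the Eisenstein lattice, and the paragraph preceding the theorem then shows every side has the stated form. The paper leaves this deduction implicit, so there is nothing to add.
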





\end{document}